\newtheorem{theo}{Theorem}[section]
\newtheorem{lem}[theo]{Lemma}
\newtheorem{propo}[theo]{Proposition}
\newtheorem{coro}[theo]{Corollary}
\theoremstyle{definition}
\newtheorem{defi}[theo]{Definition}
\theoremstyle{remark}
\newtheorem{rem}[theo]{Remark}
\newtheorem{ex}[theo]{Example}
\def\Z{\mathbb{Z}}
\def\C{\mathbb{C}}
\def\N{\mathbb{N}}
\def\Q{\mathbb{Q}}
\def\z{\zeta}
\def\n{\eta}
\def\s{\sigma}
\def\a{\alpha}
\def\e{\varepsilon}
\def\b{\beta}
\def\n'{\nu}
\def\l{\lambda}
\def\k{\kappa}
\def\T{\Theta}
\def\sq {\sigma_{q}}
\begin{document}
\sloppy
\title[$q$-Borel-Laplace summation]{$q$-Borel-Laplace summation for $q$-difference equations with two slopes.}
\author{Thomas Dreyfus}
\address{Univ Lyon, Université Claude Bernard Lyon 1, CNRS UMR 5208, Institut Camille Jordan, 43 blvd. du 11 novembre 1918, F-69622 Villeurbanne cedex, France.}
\email{dreyfus@math.univ-lyon1.fr}
\author{Anton Eloy}
\address{Université Paul Sabatier - Institut de Mathématiques de Toulouse,}
\curraddr{118 route de Narbonne, 31062 Toulouse}
\email{Anton.Eloy@math.univ-toulouse.fr}
\thanks{The first author is funded by the labex CIMI. This project has received funding from the European Research Council (ERC) under the European Union's Horizon 2020 research and innovation programme under the Grant Agreement No 648132. The final version of this paper will be published in Journal of Difference Equations and Applications.}

\subjclass[2010]{Primary 39A13}


\date{\today}


\begin{abstract} 
In this paper, we consider linear $q$-difference systems with coefficients that are germs of meromorphic functions, with Newton polygon that has two slopes. Then, we explain how to compute similar meromorphic gauge transformations than those appearing in the work of Bugeaud, using a $q$-analogue of the Borel-Laplace summation. 
\end{abstract} 

\maketitle

\tableofcontents
\pagebreak[3]
\section*{Introduction}
Let $q$ be a complex number with $|q|>1$, and let us define $\sq$, the dilatation operator that sends $y(z)$ to $y(qz)$. Divergent formal power series may appear as solutions of  linear $q$-difference systems with coefficients in $\C(z)$.
 The simplest example is the $q$-Euler equation
$$z\sq y+y=z,$$
that admits as solution the formal power series with complex coefficients $$\hat{h}:=\displaystyle\sum_{\ell\in \N}(-1)^{\ell}q^{\frac{\ell(\ell+1)}{2}}z^{\ell+1}.$$
To construct a meromorphic solution of the above equation, we use the Theta function 
$$\T_{q} (z):=\displaystyle \sum_{\ell \in \Z} q^{\frac{-\ell(\ell+1)}{2}}z^{\ell}=\displaystyle \prod_{\ell \in \N} (1-q^{-\ell-1})(1+q^{-\ell-1}z)(1+q^{-\ell}z^{-1}),$$ 
 which is analytic on $\C^{*}$, vanishes on the discrete $q$-spiral $-q^{\Z}:=\{-q^{n},n\in \Z\}$, with simple zero, and satisfies $$\s_{q}\T_{q}(z)=z\T_{q}(z)=\T_{q}\left(z^{-1}\right).$$
For $\l\in\C^*$ we denote by $[\l]$ the class of $\l$ in $\C^*/q^\Z$. 
Then, for all $[\l]\in \left(\C^{*}/q^{\Z}\right)\setminus \{[-1]\}$ the following function $\hat{h}^{[\l]}$, is solution of the same equation as $\hat{h}$ and is meromorphic on $\C^{*}$ with simple poles on the $q$-spiral $-\l q^{\Z}$:
$$\hat{h}^{[\l]}:=\displaystyle \sum_{\ell\in \Z}\frac{q^{\ell}\l}{1+q^{\ell}\l}\times\frac{1}{\T_{q}\left(\frac{q^{1+\ell}\l}{z}\right)}.$$
It is easy to see that it does not depend of the chosen representative $\l$ of the class $[\lambda]$.\\
More generally, the meromorphic solutions play a major role in Galois theory of $q$-difference equations, see \cite{BuT,DVRSZ,RS07,RS09,RSZ,S00}. In the usual Picard-Vessiot theory, the Galois group of a $q$-difference equation with coefficients in $\C(z)$ is defined over $\C$, but the solutions in the Picard-Vessiot rings involve symbols. On the other hand, Praagman in \cite{Pra} has shown the existence of a fundamental solution for such system, that is an invertible solution matrix, with entries that are meromorphic on $\C^{*}$. Applying naively  the classical Picard-Vessiot theory with those solutions would give a Galois group defined on the field of functions invariant under the action of~$\sq$, that is, the field of meromorphic functions on the torus $\C^{*}/ q^{\Z}$, rather than a group defined on $\C$. Fortunately, Sauloy has explained how to recover the Galois group with the meromorphic solutions.

 Let us consider 
\begin{equation}\label{eq1}
\sq Y=AY, \hbox{ where } A\in \mathrm{GL}_{m}(\C(\{z\})),
\end{equation}
which means that $A$ is an invertible $m\times m$ matrix with coefficients that are germs of meromorphic functions at $z=0$. We are going to assume that the slopes of the Newton polygon of (\ref{eq1}) belong to~$\Z$. See \cite{RSZ}, $\S 2.2$, for a precise definition. Then, the work of Birkhoff and Guenther implies that after an analytic gauge transformation, such system may be put into a very simple form, that is in the Birkhoff-Guenther normal form. Moreover, after a formal gauge transformation, a system in the Birkhoff-Guenther normal form may be put into a diagonal bloc system. Then, the authors of \cite{RSZ} build a set of meromorphic gauge transformations, that make the same transformation as the formal gauge transformation, with germs of entries, having poles contained in a finite number of~$q$-spirals of the form $q^{\Z}\a$, with $\a\in \C^{*}$. Moreover, the germs of meromorphic gauge transformations they build are uniquely determined by the set of poles and their multiplicities.\par 
V. Bugeaud considers the case where the Newton polygon of (\ref{eq1}) has two slopes that are not necessarily entire. Analogously to \cite{RSZ} she builds meromorphic gauge transformations, but this time, the germs of meromorphic entries have poles on $q^{d\Z}$-spirals, for some $d\in \N^{*}$, instead of $q^{\Z}$-spirals. See \cite{BuT}.\\\par 
In the differential case, we also have the existence of formal power series solutions of linear differential equations with coefficients in $\C(\{z\})$, but we may apply to them a Borel-Laplace summation, in order to obtain meromorphic solutions. See \cite{B,R85,M95,VdPS}. In the $q$-difference case, although there are several $q$-analogues of the Borel and Laplace transformations, see \cite{Abdi60,Abdi64,D4,D3,DVZ,Hahn,MZ,R92,RZ,Taha,Z99,Z00,Z01,Z02,Z03}, we do not know how to compute in general the meromorphic gauge transformations of \cite{BuT} using $q$-analogues of the Borel and Laplace transformations. Remark that when~(\ref{eq1}) has two entire slopes, we are in the framework of \cite{RSZ}, and \cite{D4}, Theorem~2.4, explains how to compute them with $q$-analogues of the Borel and Laplace transformations. See Remark~\ref{rem1}. Although the $q$-summation process of \cite{D4} applies for $q$-difference equations with two slopes in general, the functions obtained have too many poles to be optimal. Indeed, if the slopes are $0$ and $\frac{n}{d}$ with $n,d$ coprimes  numbers, we expect to find meromorphic gauge transformations having entries with $n$ $q^{d\Z}$ spirals of poles of order at most one, and those of \cite{D4} have  $n$ $q^{\Z}$ spirals of poles of order at most one.
 The main goal of this paper is to compute Bugeaud's like meromorphic gauge transformations with a $q$-Borel-Laplace summation and with a minimal number of poles. \\ \par 
The paper is organized as follows. In $\S \ref{sec1}$ we introduce the $q$-analogues of the Borel and Laplace transformations. In $\S \ref{sec2}$, we give a brief summary of \cite{BuT}. The meromorphic gauge transformations she builds have $q^{d\Z}$-spirals of poles of multiplicity at most $n$, for some $n\in \N^{*}$. We prove the existence and the uniqueness of meromorphic gauge transformations having $q^{\frac{d\Z}{n}}$-spiral of poles of multiplicity at most $1$. In $\S \ref{sec3}$, we explain how to compute the latter meromorphic gauge transformations using a $q$-analogue of the Borel-Laplace summation. \\ \par 

\textbf{Acknowledgments.} The authors would like the thank the anonymous referee for permitting them to correct a mistake that was originally made in the first version of the paper.
\pagebreak[3]
\section{Definition of $q$-analogues of Borel and Laplace transformations.}\label{sec1}
We start with the definition of the $q$-Borel and the $q$-Laplace transformations. The definition of the $q$-Borel we use comes from  \cite{D4}. Note that when $\mu=1$, we recover the $q$-Borel transformation that was originally introduced in \cite{R92}. The $q$-Laplace we use is slightly different than the transformation defined in \cite{D4}. When $q>1$ is real and $q$ is replaced by $q^{1/\mu}$, we recover the $q$-Laplace transformation and the functional space introduced in \cite{Z02}, Theorem 1.2.1. Earlier introductions of $q$-Laplace transformations can be found in \cite{Abdi60,Abdi64}. Those latter behave differently than our $q$-Laplace transformation, since they involve $q$-deformations of the exponential, instead of the Theta function. \\ \par 

Throughout the paper, we will say that two analytic functions $f$ and $g$ are equal, if there exists $U\subset \C$, open connected set, such that $f$ and $g$ are analytic on $U$ and are equal on $U$. We fix, once for all, a determination of the logarithm over the Riemann surface of the logarithm we call $\log$. If $\a\in \C$, and $b\in \C^{*}$, then, we write $b^{\a}$ instead of~$e^{\a\log (b)}$. One has $b^{\a+\b}=b^{\a}b^{\b}$, for all $\a,\b\in \C$, and all $b\in \C^{*}$  Let $\C[[z]]$ be the ring of formal power series.

\pagebreak[3]
\begin{defi}\label{defi1}
Let $\mu\in \Q_{>0}$. We define the $q$-Borel transformation of order $\mu$ as follows
$$
\begin{array}{llll}
\hat{\mathcal{B}}_{\mu}:&\C[[z]]&\longrightarrow&\C[[\z]]\\
&\displaystyle\sum_{\ell\in \N} a_{\ell}z^{\ell }&\longmapsto&\displaystyle\sum_{\ell\in \N} a_{\ell}q^{\frac{-\ell(\ell-1)}{2\mu }} \z^{\ell }.
\end{array}
$$
\end{defi}

We are going to make the following abuse of notations. For $\l\in\C^*$, and $n,d$ positive co-prime integers, we denote by $[\l]$ the class of $\l$ in $\C^*/q^{\frac{d\Z}{n}}$. Let $\mathcal{M}(\C^{*},0)$ be the field of functions that are meromorphic on some punctured neighborhood of $0$ in~$\C^{*}$. More explicitly, $f\in \mathcal{M}(\C^{*},0)$ if and only if there exists $V$, an open neighborhood of $0$, such that $f$ is analytic on $V\setminus \{0\}$.
\begin{defi}
Let $\mu=\frac{n}{d}\in \Q_{>0}$ with $n,d$ positive co-prime integers and $[\l] \in \C^{*}/q^{\frac{d\Z}{n}}$.   An element $f$ of $\mathcal{M}(\C^{*},0)$ is said to belongs to $\mathbb{H}_{\mu}^{[\l]}$ if there exist~$\e>0$ and a connected domain $\Omega \subset \C$, such that:
\begin{itemize}
\item  $\displaystyle\bigcup_{\ell\in \frac{d\Z}{n}}\Big\{z\in \C^{*}\Big| \left|z-\l q^{\ell}\right|<\e\left| q^{\ell}\l\right| \Big\}\subset \Omega.$
\item The function $f$ can be continued to an analytic function on $\Omega$ with $q^{\frac{d}{n}}$-exponential growth at infinity, which means that there exist constants $L,M>0$, such that for all $z\in \Omega$:
$$|f(z)|<L\T_{|q|^{\frac{d}{n}}}(M|z|) .$$
\end{itemize}
Note that it does not depend of the chosen representative $\l$.
An element $f$ of $\mathcal{M}(\C^{*},0)$ is said to belongs to $\mathbb{H}_{\mu}$, if there exists a finite set~${\Sigma \subset \C^{*}/q^{\frac{d\Z}{n}}}$, such that for all ${[\l] \in \left(\C^{*}/q^{\frac{d\Z}{n}}\right)\setminus \Sigma }$, we have $f\in \mathbb{H}_{\mu}^{[\l]}$.
\end{defi}

\begin{defi}
Let $\mu=\frac{n}{d}\in \Q_{>0}$ with $n,d$ positive co-prime integers, $[\l] \in \C^{*}/q^{\frac{d\Z}{n}}$. Because of \cite{D4}, Lemma 1.3, the following map, which does not depend of the chosen representative $\l$, is well defined and is called $q$-Laplace transformation of order $\mu$: 

$$
\begin{array}{llll}
\mathcal{L}_{\mu}^{[\l]}:&\mathbb{H}_{\mu}^{[\l]}&\longrightarrow&\mathcal{M}(\C^{*},0)\\
&f&\longmapsto&
\displaystyle \sum_{\ell\in \frac{d\Z}{n}}\frac{f\left(q^{\ell}\l\right)}{\T_{q^{\frac{d}{n}}}\left(\frac{q^{\frac{d}{n}+\ell}\l}{z}\right)}.
\end{array}
$$

For $|z|$ close to $0$, the function $\mathcal{L}_{\mu}^{[\l]}\left(f\right)(z)$ has poles of order at most $1$ that are contained in the~$q^{\frac{d\Z}{n}}$-spiral $-\l q^{\frac{d\Z}{n}}$. 
\end{defi}

\begin{rem}\label{rem3}
The $q$-Laplace we use is slightly different than the transformation defined in \cite{D4}. Let $\mu\in \Q_{>0}$ and $q':=q^{1/\mu}$. The $q'$-Laplace transformation of order $(1,1)$ of \cite{D4} equals to the $q$-Laplace transformation of order $\mu$ of this paper.
\end{rem}

The following lemmas, which give basic properties of the Borel and Laplace transformations, will be needed in $\S \ref{sec3}$. For $n,d$ positive co-prime integers, we set $\sq^{d/n}:=\sigma_{q^{d/n}} $.

\pagebreak[3]
\begin{lem}\label{lem1}
Let $\mu:=\frac{n}{d}\in \Q_{>0}$ with $n,d$ positive co-prime integers, and $[\l] \in \C^{*}/q^{\frac{d\Z}{n}}$. 

\begin{itemize}
\item 
Let $\hat{h}\in \C[[z]]$. Then, we have  $\hat{\mathcal{B}}_{\mu}\left( z\sq^{d/n}\left(\hat{h}\right)\right)= \z\hat{\mathcal{B}}_{\mu}\left( \hat{h}\right)$.
\item 
Let $f\in \mathbb{H}_{\mu}^{[\l]}$. 
Then, we have $z\sq^{d/n}\mathcal{L}_{\mu}^{[\l]}\left( f\right)= \mathcal{L}_{\mu}^{[\l]}\left( \z f\right)$.
\end{itemize}
\end{lem}

\begin{proof}
The first point is given by \cite{D4}, Lemma 1.4. The second point is a consequence of \cite{D4}, Lemma 1.4, and Remark \ref{rem3}.
\end{proof}

\begin{rem}\label{rem2}
Let $\hat{h}\in \C[[z]]$ and $f\in \mathbb{H}_{\mu}^{[\l]}$. Iterating $n$ times Lemma \ref{lem1}, we find $\hat{\mathcal{B}}_{\mu}\left( z^{n}\sq^{d}\left(\hat{h}\right)\right)= q^{\frac{-d(n-1)}{2}}\z^{n}\hat{\mathcal{B}}_{\mu}\left( \hat{h}\right)$ and $z^{n}\sq^{d}\mathcal{L}_{\mu}^{[\l]}\left( f\right)= q^{\frac{d (n-1)}{2}} \mathcal{L}_{\mu}^{[\l]}\left( \z^{n} f\right)$.

\end{rem}
Let $\C\{z\}$ be the ring of germs of analytic functions at $z=0$.
\pagebreak[3]
\begin{lem}\label{lem2}
Let $\mu:=\frac{n}{d}\in \Q_{>0}$ with $n,d$ positive co-prime integers, $[\l] \in \C^{*}/q^{\frac{d\Z}{n}}$. Let $f\in \C\{z\}$. Then,
\begin{itemize}
\item $\hat{\mathcal{B}}_{\mu}\left( f\right)\in \mathbb{H}_{\mu}^{[\l]}$.
\item $\mathcal{L}_{\mu}^{[\l]}\circ \hat{\mathcal{B}}_{\mu}\left( f\right)=f$.
\end{itemize}
\end{lem}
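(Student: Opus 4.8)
The first assertion is a growth estimate. Writing $f=\sum_{\ell\in\N}a_{\ell}z^{\ell n}$, convergence of $f$ gives constants $C,R>0$ with $|a_{\ell}|\le CR^{\ell}$. I would first observe that $\hat{\mathcal{B}}_{\mu}(f)=\sum_{\ell\in\N}(a_{\ell})^{1/n}q^{-\ell(\ell-1)d/2}\z^{\ell}$ is entire: since $|q|>1$, the Gaussian-type decay of $|q|^{-\ell(\ell-1)d/2}$ beats the at most geometric growth $|(a_{\ell})^{1/n}|\le C^{1/n}R^{\ell/n}$, so the radius of convergence is infinite. The required $q^{d}$-exponential bound then comes from a coefficientwise comparison with $\T_{|q|^{d}}$: as $\T_{|q|^{d}}(x)=\sum_{\ell\in\Z}|q|^{-\ell(\ell+1)d/2}x^{\ell}$ has nonnegative coefficients, its $\ell\ge0$ part already dominates, and the identity $|q|^{-\ell(\ell-1)d/2}=|q|^{\ell d}|q|^{-\ell(\ell+1)d/2}$ yields
$$|\hat{\mathcal{B}}_{\mu}(f)(\z)|\le C^{1/n}\sum_{\ell\in\N}|q|^{-\ell(\ell+1)d/2}\left(R^{1/n}|q|^{d}|\z|\right)^{\ell}\le C^{1/n}\,\T_{|q|^{d}}\left(R^{1/n}|q|^{d}|\z|\right).$$
Thus, with $L=C^{1/n}$, $M=R^{1/n}|q|^{d}$ and $\O=\C$ (connected and containing all the prescribed discs), one gets $\hat{\mathcal{B}}_{\mu}(f)\in\mathbb{H}_{d}^{[\l]}$ for every $\l$.

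For the inversion, the plan is to prove $\mathcal{L}_{\mu}^{[\l]}\circ\hat{\mathcal{B}}_{\mu}(f)=f$ first on monomials and then to propagate and extend. Composing the two halves of Lemma~\ref{lem1}, I would note that if $\mathcal{L}_{\mu}^{[\l]}\circ\hat{\mathcal{B}}_{\mu}(\hat h)=\hat h$, then using $\hat{\mathcal{B}}_{\mu}(z^{n}\sq^{d}\hat h)=\z\,\hat{\mathcal{B}}_{\mu}(\hat h)$ followed by $\mathcal{L}_{\mu}^{[\l]}(\z g)=z^{n}\sq^{d}\mathcal{L}_{\mu}^{[\l]}(g)$ gives $\mathcal{L}_{\mu}^{[\l]}\circ\hat{\mathcal{B}}_{\mu}(z^{n}\sq^{d}\hat h)=z^{n}\sq^{d}\hat h$. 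Hence the collection of series fixed by $\mathcal{L}_{\mu}^{[\l]}\circ\hat{\mathcal{B}}_{\mu}$ is stable under $\hat h\mapsto z^{n}\sq^{d}\hat h$; iterating this operator on the constant series produces, up to an explicit nonzero scalar, every monomial $z^{kn}$, and the compatibility with multiplication by a scalar is read off directly from the definitions of the two transformations. This reduces everything to the base case at the constant series.

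The base case is the identity $\mathcal{L}_{\mu}^{[\l]}\circ\hat{\mathcal{B}}_{\mu}(1)=\mathcal{L}_{\mu}^{[\l]}(1)=1$, that is $\sum_{\ell\in d\Z}\T_{q^{d}}(q^{d+\ell}\l/z)^{-1}=1$. I would prove it by observing that the left-hand side $\Phi$ is invariant under $z\mapsto q^{d}z$ (reindex $\ell\mapsto\ell+d$), hence descends to an elliptic function on $\C^{*}/q^{d\Z}$; the simple poles of the individual terms along the common spiral $-\l q^{d\Z}$ cancel by the functional equation $\s_{q}\T_{q^{d}}=z\T_{q^{d}}$ relating consecutive terms, so $\Phi$ is holomorphic, therefore constant, the value $1$ being fixed by a direct evaluation (this normalization is also contained in \cite{D4}). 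Finally one passes from monomials to an arbitrary $f\in\C\{z^{n}\}$ by expanding $f$ in its monomials and interchanging $\mathcal{L}_{\mu}^{[\l]}$ with the resulting series. The main obstacle is exactly this interchange: it must be justified by local uniform convergence on $\O$, which is precisely what the $q^{d}$-exponential estimate of the first part provides, together with the control of $\T_{q^{d}}$ away from its zeros along $-\l q^{d\Z}$.
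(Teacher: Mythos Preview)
Your overall architecture matches the paper's: establish $\mathcal{L}_{\mu}^{[\l]}\circ\hat{\mathcal{B}}_{\mu}(az^{\ell n})=az^{\ell n}$ by induction on $\ell$ via Lemma~\ref{lem1}, then pass from monomials to a general $f\in\C\{z^{n}\}$. Two points of execution differ.

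For the first bullet you supply an explicit coefficientwise estimate showing $\hat{\mathcal{B}}_{\mu}(f)$ is entire with $q^{d}$-exponential growth; the paper does not isolate this step and only invokes it implicitly at the end. Your treatment here is more informative.

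For the base case $\mathcal{L}_{\mu}^{[\l]}(1)=1$, the paper's route is a direct computation from the quasi-periodicity $\T_{q^{d}}(q^{kd}w)=q^{k(k-1)d/2}w^{k}\T_{q^{d}}(w)$: writing $\ell=kd$ and $w=\l/z$ one gets
\[
\sum_{k\in\Z}\frac{1}{\T_{q^{d}}\!\left(q^{(k+1)d}\l/z\right)}
=\frac{1}{\T_{q^{d}}(\l/z)}\sum_{j\in\Z}q^{-j(j-1)d/2}(z/\l)^{j}
=\frac{\T_{q^{d}}(\l/z)}{\T_{q^{d}}(\l/z)}=1,
\]
hence $\mathcal{L}_{\mu}^{[\l]}(1)=1^{n}=1$. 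Your elliptic-function detour is in principle reasonable, but the pole-cancellation step as written does not stand: every individual summand $\T_{q^{d}}(q^{d+\ell}\l/z)^{-1}$ already has simple poles on the \emph{whole} spiral $-\l q^{d\Z}$, not at a single point of it, so there is no pairwise ``consecutive terms'' cancellation to appeal to (and the correct functional equation is $\s_{q^{d}}\T_{q^{d}}=z\,\T_{q^{d}}$, not $\s_{q}\T_{q^{d}}=z\,\T_{q^{d}}$). What actually shows the sum is pole-free is precisely the identity displayed above; once you use it, the elliptic-constant argument becomes superfluous. You also leave the value of the constant to a citation, whereas the computation gives it immediately.
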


\begin{proof}
Let us prove briefly the first point. Let us take $f=\displaystyle\sum_{\ell\in\N}a_\ell z^\ell\in\C\{z\}$. It's easy to see that its $q$-Borel transform is defined and analytic on the whole plan $\C$, so we just have to check the $q^{\frac{d}{n}}$-exponential growth at infinity. Since $f\in \C\{z\}$, we have the existence of $L,M>0$ such that for all $\ell\in \N$, $|a_{\ell}|<LM^{\ell}$. We have  for all $z\in \C$,
\[\begin{array}{rcl}
 \left|\hat{\mathcal{B}}_{\mu}(f)(z)\right| & \leq & \displaystyle\sum_{\ell\in\N}|a_\ell||q|^{\frac{-\ell(\ell-1)d}{2n}}|z|^\ell\\
& \leq & L\displaystyle\sum_{\ell\in\N}M^{\ell}|q|^{\frac{-\ell(\ell-1)d}{2n}}|z|^\ell\\
& \leq &L \Theta_{|q|^{\frac{d}{n}}}(M|z|).
\end{array}\]

Let us now prove the second point. Using the expression of $\T_{q}$, we find that for all $k\in \Z$, $$\T_{q^{\frac{d}{n}}}\left(q^{k\frac{d}{n}}z\right)=q^{\frac{k(k-1)d}{2n}}z^{k}\T_{q^{\frac{d}{n}}}(z).$$ 
Following the definition of the $q$-Laplace transformation, we obtain that, for all ${[\l]\in \C^{*}/q^{\frac{d\Z}{n}}}$,  
$$\mathcal{L}_{\mu}^{[\l]}\left(1\right)=1.$$
We claim that for all $a\in \C$, $\ell\in \N$, $\mathcal{L}_{\mu}^{[\l]}\circ \hat{\mathcal{B}}_{\mu}\left( az^{\ell }\right)=az^{\ell }$.
Let us prove the claim by an induction on $\ell$. The case $\ell=0$ is a straightforward consequence of $\hat{\mathcal{B}}_{\mu}\left( 1\right)=1$, $\mathcal{L}_{\mu}^{[\l]}\left(1\right)=1$, and the $\C$-linearity of the two maps.\par 
Let $\ell\in \N$, and assume that for every $a\in \C$, we have  $\mathcal{L}_{\mu}^{[\l]}\circ \hat{\mathcal{B}}_{\mu}\left( az^{\ell }\right)=az^{\ell }$. Let $a\in \C^{*}$ and let us prove that $\mathcal{L}_{\mu}^{[\l]}\circ \hat{\mathcal{B}}_{\mu}\left( az^{\ell+1}\right)=az^{\ell+1 }$. We use Lemma \ref{lem1} to find 
$ 
\mathcal{L}_{\mu}^{[\l]}\circ \hat{\mathcal{B}}_{\mu}\left(  az^{\ell+1}\right)=a z \mathcal{L}_{\mu}^{[\l]}\circ \hat{\mathcal{B}}_{\mu}\left(  q^{ \frac{-\ell n}{d}}z^{\ell }\right).$ We use the induction hypothesis to obtain
$$az\sq^{d/n} \mathcal{L}_{\mu}^{[\l]}\circ \hat{\mathcal{B}}_{\mu}\left(q^{ \frac{-\ell n}{d}} z^{\ell }\right)=az\sq^{d/n} \left(q^{ \frac{-\ell n}{d}} z^{\ell }\right)=az^{\ell+1}.$$
This proves the claim. To prove the lemma, we remark that $\hat{\mathcal{B}}_{\mu}$ (resp. $\mathcal{L}_{\mu}^{[\l]}$) is an additive morphism between $\C\{z\}$ and $\mathbb{H}_{\mu}^{[\l]}$ (resp. $\mathbb{H}_{\mu}^{[\l]}$ and $\mathcal{M}(\C^{*},0)$).
\end{proof}

We finish this section by giving asymptotic properties of the Borel-Laplace summation. The definition of $q$-Gevrey asymptotic is an adaptation of \cite{D4}, Definition 1.8, in this setting. 

\pagebreak[3]
\begin{defi}
Let $\mu:=\frac{n}{d}\in \Q_{>0}$ with $n,d$ positive co-prime integers, $[\l] \in \C^{*}/q^{\frac{d\Z}{n}}$, $f\in\mathcal{M}(\C^{*},0)$ and ${\hat{f}:=\displaystyle\sum_{i\in \N}\hat{f}_{i}z^{i}\in \C[[z]]}$. We say that $$f\displaystyle\sim^{[\l]}_{\mu}\hat{f},$$ if for all $\e,R>0$ sufficiently small, there exist $L,M>0$, such that for all $k\in \N$ and for all 
$z$ in $$\Big\{z\in \C^{*}\Big| |z|<R\Big\}\setminus \displaystyle\bigcup_{\ell\in \frac{d\Z}{n}}\Big\{z\in \C^{*}\Big| \left|z+ q^{\ell}\l\right|<\e\left| q^{\ell}\l \right|\Big\},$$
we have 
$$\Bigg|f(z)-\displaystyle\sum_{i=0}^{k-1}\hat{f}_{i}z^{i} \Bigg|<LM^{k}|q|^{\frac{k(k-1)}{2\mu}}|z|^{k}.$$
\end{defi}
For $\mu\in \Q_{>0}$, we define the formal $q$-Laplace transformation of order $\mu$ as follows:
$$
\begin{array}{llll}
\hat{\mathcal{L}}_{\mu}:&\C[[\z]]&\longrightarrow&\C[[z]]\\
&\displaystyle\sum_{\ell\in \N} a_{\ell}\z^{\ell}&\longmapsto&\displaystyle\sum_{\ell\in \N} a_{\ell}q^{\frac{\ell(\ell-1)}{2\mu}} z^{\ell}.
\end{array}
$$
Using Lemma \ref{lem1}, for all $\ell\in \N$, and  $[\lambda]\in \C^{*}/q^{\frac{d\Z}{n}}$, we obtain $$\widehat{\mathcal{L}}_{\mu}\left(\z^{\ell}\right)=\mathcal{L}_{\mu}^{[\l]}\left(\z^{\ell}\right).$$
\pagebreak[3]
\begin{propo}
Let $\mu:=\frac{n}{d}\in \Q_{>0}$ with $n,d$ positive co-prime integers, $[\l] \in \C^{*}/q^{\frac{d\Z}{n}}$ and  $f\in  \mathbb{H}^{[\l]}_{\mu}$. Then, we have
$$\mathcal{L}_{\mu}^{[\l]}\left( f\right)\displaystyle\sim^{[\l]}_{\mu}\widehat{\mathcal{L}}_{\mu}\left(f\right). $$
\end{propo}

\begin{proof}
This is a direct consequence of \cite{D4}, Proposition 1.9, and Remark \ref{rem3}.
\end{proof}

\begin{coro}
Let $\mu:=\frac{n}{d}\in \Q_{>0}$ with $n,d$ positive co-prime integers, $[\l] \in \C^{*}/q^{\frac{d\Z}{n}}$ and $f\in \C[[z]]$ with  $\hat{\mathcal{B}}_{\mu}\left( f\right)\in \mathbb{H}_{\mu}^{[\l]}$. Then $\mathcal{L}_{\mu}^{[\l]}\circ \hat{\mathcal{B}}_{\mu}\left( f\right)\displaystyle\sim^{[\l]}_{\mu} f$.
\end{coro}
 \pagebreak[3]
\section{Local analytic study of $q$-difference systems.}\label{sec2}
 
Let $\C((z))$ be the fraction field of  $\C[[z]]$. Let $K$ be a sub-field of $\C((z))$ or $\mathcal{M}(\C^{*},0)$ stable under $\sq$ and let ${A,B\in \mathrm{GL}_{m}(K)}$. The two $q$-difference systems, $\sq Y=AY$ and $\sq Y=BY$ are said to be equivalent over $K$, if there exists $P\in \mathrm{GL}_{m}(K)$, called the gauge transformation, such that $$B=P[A]_{\displaystyle\sq}:=(\sq P)AP^{-1}.$$ In particular, $$\sq Y=AY\Longleftrightarrow\sq \left(PY\right)=BPY.$$ 
Conversely, if there exist $A,B,P\in \mathrm{GL}_{m}(K)$
such that
$\sq Y=AY$, $ \sq Z=BZ$ and $Z=PY$, then 
$$B=P\left[ A\right]_{\displaystyle\sq}.$$\par 
We remind that $\C(\{z\})$ is the field of germs of meromorphic functions at $z=0$. Until the end of the paper, we fix ${A\in \mathrm{GL}_{m}\Big(\C(\{z\})\Big)}$. 
Let $\mu:=\frac{n}{d}\in \Q_{>0}$ with $n,d$ positive co-prime integers, and $a\in \C^{*}$. We define the $d$ times $d$ companion matrix as follows:
$$E_{n,d,a}:=\begin{pmatrix}
0&1&&\\
&\ddots&\ddots& \\
&&\ddots&1 \\
az^{n}&&&0
\end{pmatrix}.$$
For the proof of the following result, see \cite{BuT}, Theorem 1.18. See also \cite{VdPR}, Corollary 1.6.
\begin{theo}\label{theo2}
There exist integers ${n_{1},\dots,n_{k}\in \Z}$, ${d_{1},\dots,d_{k},m_{1},\dots,m_{k}\in \N^{*}}$, with $\frac{n_{1}}{d_{1}}<\dots<\frac{n_{k}}{d_{k}}$,~$\gcd(n_{i},d_{i})=1$, ${\sum m_{i}d_{i}=m}$, and 
\begin{itemize}
\item $\hat{H}\in\mathrm{GL}_{m}\Big(\C((z))\Big)$,
\item $C_{i}\in \mathrm{GL}_{m_{i}}(\C)$,
\item $a_{i}\in \C^{*}$,
\end{itemize}
 such that $A=\hat{H}\left[\mathrm{Diag}\Big(C_{i}\otimes E_{n_{i},d_{i},a_{i}}\Big)\right]_{\displaystyle\sq}$, where $$\mathrm{Diag}\Big(C_{i}\otimes E_{n_{i},d_{i},a_{i}}\Big):=\begin{pmatrix}
C_{1}\otimes E_{n_{1},d_{1},a_{1}} &&\\
&\ddots&\\
&&C_{k}\otimes E_{n_{k},d_{k},a_{k}} 
\end{pmatrix}.$$
\end{theo}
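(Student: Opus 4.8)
The plan is to reinterpret $\sq Y=AY$ as a $q$-difference module $M$ over $\C((z))$ and to classify $M$ up to formal (i.e. $\C((z))$-linear) gauge equivalence, by first splitting it along the slopes of its Newton polygon and then normalizing each pure-slope piece. I would proceed in four steps.

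First, I would attach to $A$ its Newton polygon and read off the slopes $\mu_1<\dots<\mu_k$, written in lowest terms $\mu_i=n_i/d_i$ with $\gcd(n_i,d_i)=1$. Since the Newton polygon is a gauge invariant, the standard slope-filtration argument produces a canonical filtration $0=M_0\subset M_1\subset\dots\subset M_k=M$ whose successive quotients $M_i/M_{i-1}$ are pure of slope $\mu_i$. The horizontal length of the segment of slope $\mu_i$ equals $\dim_{\C((z))}(M_i/M_{i-1})=m_id_i$, which forces the relation $\sum m_id_i=m$.

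Second---and this is the crux---I would prove that the filtration splits over $\C((z))$. Writing the system in a basis adapted to the filtration, so that $A$ is block upper-triangular, splitting the extension of the slope-$\mu_i$ block $A_1$ by the slope-$\mu_j$ block $A_2$ (with $i\neq j$) amounts to solving the cohomological equation $(\sq X)A_2-A_1X=-B$ for the off-diagonal block $B$. The key point is that the $\C((z))$-linear operator $X\mapsto(\sq X)A_2-A_1X$ is bijective exactly because the two slopes are distinct: the dominant $z$-adic behaviours contributed by $A_1X$ and by $(\sq X)A_2$ scale differently and cannot cancel, so the operator can be inverted coefficient by coefficient on a Laurent series expansion of $X$. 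This vanishing of the obstruction is the $q$-analogue of the Hukuhara--Turrittin splitting, and it is the only place where the strict separation of the slopes is essential.

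Third, I would classify a module pure of a single slope $\mu=n/d$. Pulling back along the ramified cover $\C((z))\hookrightarrow\C((z^{1/d}))$ turns the slope into the integer $n$; twisting by a suitable rank-one object then removes this slope, so that the module becomes regular singular (slope $0$). A regular singular $q$-difference module over $\C((z))$ is formally equivalent to a constant system $\sq Y=CY$ with $C\in\mathrm{GL}(\C)$; grouping the eigenvalues of $C$ lying in a common $q^{\Z}$-class and recording the associated residual invariant $a\in\C^{*}$ (defined modulo $q^{d\Z}$) yields, for the slope $\mu_i$, a constant matrix $C_i\in\mathrm{GL}_{m_i}(\C)$ together with a scalar $a_i$. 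Fourth, I would descend from the ramified cover back to $\C((z))$: the rank-one pure-slope object on $\C((z^{1/d}))$, pushed forward to $\C((z))$ by restriction of scalars along the $\Z/d\Z$-cover, is precisely the $d\times d$ companion block $E_{n,d,a}$, whose cyclic shape encodes the Galois action $z^{1/d}\mapsto\zeta z^{1/d}$ and for which $a$ survives as the residual invariant. Reassembling the normalized pure-slope pieces, each of the form $C_i\otimes E_{n_i,d_i,a_i}$, and collecting the formal gauge transformations produced in Steps~2 and~3 into a single $\hat H\in\mathrm{GL}_m(\C((z)))$, gives $A=\hat H\big[\mathrm{Diag}(C_i\otimes E_{n_i,d_i,a_i})\big]_{\sq}$, as claimed. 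I expect the main obstacle to be Step~2: making the inversion of the cohomological operator uniform across the whole filtration, which is exactly the technical heart isolated in \cite{BuT}, Theorem~1.18, and \cite{VdPR}, Corollary~1.6.
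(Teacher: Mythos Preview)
The paper does not prove this theorem at all: it simply cites \cite{BuT}, Theorem~1.18, and \cite{VdPR}, Corollary~1.6, and moves on. Your outline is precisely the standard route taken in those references---slope filtration, formal splitting via the cohomological equation when the slopes differ, reduction of pure-slope pieces to the regular case after ramification and twist, and descent to the companion block $E_{n,d,a}$---so there is nothing to compare. One small wrinkle: in Step~3 you speak of grouping eigenvalues by $q^{\Z}$-class and extracting a residual invariant $a$ for each class, which would give several $a$'s per slope; the normal form stated here packages a single $a_i$ per slope and absorbs the remaining data into $C_i$, so be careful that your bookkeeping lands on exactly this presentation rather than a finer one.
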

The aim of this paper will be to replace this $\hat{H}$ by  a meromorphic gauge transformation computed with $q$-Borel-Laplace transformations.\\
 Assume that $k=2$, see Theorem \ref{theo2} for the notation, and let $n_{1},n_{2},d_{1},d_{2},a_{1},a_{2},C_{1},C_{2}$, given by Theorem \ref{theo2}. In \cite{BuT}, V. Bugeaud makes the local analytic classification of $q$-difference systems with two slopes. Let us now detail her work. She explains how to reduce to the case where ${C_{2}\otimes E_{n_{2},d_{2},a_{2}}=\mathrm {I}_{1}}$, that is the identity matrix of size $1$ and where $C_1$ is a unipotent Jordan bloc. For $r\geq 1$ let us write 
 \[U_r:=\begin{pmatrix}
 1 & 1 & & 0\\
 & \ddots & \ddots & \\
 & & \ddots & 1\\
 0 & & & 1
 \end{pmatrix}\in \mathrm{GL}_{r}(\C),\]
the unipotent Jordan bloc of length $r$.

\begin{center}\textbf{Until the end of the paper, we assume that $k=2$, $C_{2}\otimes E_{n_{2},d_{2},a_{2}}=\mathrm {I}_{1}$ and $C_1=U_r$, for some $r\geq 1$.}\end{center}
Note that due to the assumption, we have $\frac{n_{2}}{d_{2}}=0$ and $\frac{n_{1}}{d_{1}}<0$. Let $n:=-n_{1}\in \N^{*},d:=d_{1}$, $\mu:=\frac{n}{d}\in \Q_{>0}$ and $a:=a_{1}$. Note that $m-1=d\times r$. Let us remind that $\C\{z\}$ is the ring of germs of analytic functions at $z=0$. Using  \cite{BuT}, Theorem 2.9, we deduce:
\pagebreak[3]
\begin{theo}\label{theo1}
There exist
\begin{itemize}
\item $W$, column vector of size $dr$ with coefficients in $\displaystyle \sum_{\nu=0}^{n-1}\C z^{\nu}$,
\item $F\in\mathrm{GL}_{m}\Big(\C(\{z\})\Big)$, 
\end{itemize}
such that $A=F[B]_{\displaystyle\sq}$, where:
$$B:=\begin{pmatrix}
E_{-n,d,a}\otimes U_r&W\\
0&1
\end{pmatrix}. $$
\end{theo}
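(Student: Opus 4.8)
The plan is to obtain the statement as a specialization of Theorem~\ref{theo2} combined with V.~Bugeaud's Birkhoff--Guenther normal form (\cite{BuT}, Theorem~2.9). First I would start from the decomposition of Theorem~\ref{theo2}, which under the running assumptions ($k=2$, $C_{1}=U_{r}$, $C_{2}\otimes E_{n_{2},d_{2},a_{2}}=\mathrm{I}_{1}$, $n_{1}=-n$, $d_{1}=d$, $a_{1}=a$) reads
$$A=\hat{H}\Big[\mathrm{Diag}\big(U_{r}\otimes E_{-n,d,a},\,1\big)\Big]_{\sq},\qquad \hat{H}\in \mathrm{GL}_{m}\big(\C[[z]]\big),$$
the normalization $\hat H\in\mathrm{GL}_m(\C[[z]])$ being available by the remark following Theorem~\ref{theo2}. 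This exhibits $A$ as \emph{formally} equivalent to a diagonal model with the two slopes $\frac{n_{1}}{d_{1}}=-\mu<0$ and $\frac{n_{2}}{d_{2}}=0$ on the diagonal.

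Second, I would replace the block $U_{r}\otimes E_{-n,d,a}$ by $E_{-n,d,a}\otimes U_{r}$. These two Kronecker products are conjugate by the perfect-shuffle permutation matrix $\Pi$, which is constant and hence satisfies $\sq\Pi=\Pi$, so that $\Pi\big[U_{r}\otimes E_{-n,d,a}\big]_{\sq}=\Pi\,(U_{r}\otimes E_{-n,d,a})\,\Pi^{-1}=E_{-n,d,a}\otimes U_{r}$. Since composing gauge transformations multiplies the formal gauge on the right by a constant invertible matrix, after a further constant gauge transformation I may assume, without changing the convergence type of the gauge, that the upper-left diagonal block of the model is exactly $E_{-n,d,a}\otimes U_{r}$, of size $m-1=dr$.

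Third, I would invoke \cite{BuT}, Theorem~2.9: the analytic gauge equivalence class of such a two-slope system contains a representative in Birkhoff--Guenther normal form, namely a block-upper-triangular matrix whose diagonal blocks are precisely the two pure-slope models $E_{-n,d,a}\otimes U_{r}$ and $1$, and whose single off-diagonal block is a column vector $W$ of size $m-1$ with polynomial entries. The degree constraint $W\in\big(\sum_{\nu=0}^{n-1}\C z^{\nu}\big)^{m-1}$ is the degree bound dictated by the gap between the two slopes. Unravelling ``analytic gauge equivalence'' then produces $F\in\mathrm{GL}_{m}(\C\{z\})$ with $A=F[B]_{\sq}$ and $B$ of the asserted shape, which is the claim.

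The step I expect to be the main obstacle is the third one: faithfully transferring Bugeaud's statement to the exact matrix $B$, matching the Kronecker ordering $E_{-n,d,a}\otimes U_{r}$ (rather than its transpose under $\Pi$) and verifying that her degree bound for the off-diagonal coupling is \emph{exactly} strictly below $n$ under our normalization $\frac{n_1}{d_1}=-\mu$, $\frac{n_2}{d_2}=0$. The genuinely analytic (as opposed to merely formal) nature of $F$, i.e.\ $F\in\mathrm{GL}_{m}(\C\{z\})$, is the substantive content imported from \cite{BuT}, ultimately resting on the existence of meromorphic fundamental solutions; I would treat it as a black box rather than reprove it.
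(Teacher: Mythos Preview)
Your approach is essentially the same as the paper's: the paper does not give an independent proof of this theorem but simply deduces it from \cite{BuT}, Theorem~2.9, under the standing assumptions, and your proposal does exactly that, adding only the bookkeeping about the shuffle permutation to pass from $U_{r}\otimes E_{-n,d,a}$ (the ordering of Theorem~\ref{theo2}) to $E_{-n,d,a}\otimes U_{r}$. Your worries about the Kronecker ordering and the degree bound are legitimate housekeeping, but there is nothing missing beyond checking that Bugeaud's statement specializes as claimed.
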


It follows from Theorem \ref{theo2} that we have the existence and the uniqueness of $$\begin{pmatrix}
\mathrm {I}_{dr}&\hat{H} \\
0&1
\end{pmatrix}\in \mathrm{GL}_{m}\Big(\C[[z]]\Big),$$ formal gauge transformation, that satisfies
$$B=\begin{pmatrix}
\mathrm {I}_{dr}&\hat{H} \\
0&1
\end{pmatrix}\Big[\mathrm{Diag} (E_{-n,d,a}\otimes U_r,1)\Big]_{\displaystyle\sq}.$$ 
 Let us define the finite set $\Sigma\subset \C^{*}/q^{\frac{d\Z}{n}}$, as follows
$$\Sigma := \left\{ [\l]\in \C^*/q^{\frac{d\Z}{n}} \Big| \lambda^n \in aq^{\frac{(n-1)d}{2}} q^{d\Z}\right\}.$$
 Let $\mathcal{M}(\C^{*})$ be the field of meromorphic functions on $\C^{*}$.

\pagebreak[3]
\begin{propo}\label{propo1}
Let $[\lambda]\in \left(\C^{*}/q^{\frac{d\Z}{n}}\right)\setminus \Sigma$. There exists a unique matrix $$\begin{pmatrix}
\mathrm {I}_{dr}&\hat{H}^{\left[\l\right]} \\
0&1
\end{pmatrix}\in \mathrm{GL}_{m}\Big(\mathcal{M}(\C^{*})\Big),$$ such that 
$B=\begin{pmatrix}
\mathrm {I}_{dr}&\hat{H}^{\left[\l\right]} \\
0&1
\end{pmatrix}\Big[\mathrm{Diag} ( E_{-n,d,a}\otimes U_r,1)\Big]_{\displaystyle\sq}$, and  for all $1\leq  i \leq dr$, the meromorphic function of the line number $i$ of the column vector $\hat{H}^{\left[\l\right]}$, has poles of order at most~$1$ contained in the $q^{\frac{d\Z}{n}}$-spiral $-\l q^{-i+1}q^{\frac{d\Z}{n}}$.  
\end{propo}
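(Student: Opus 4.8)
The plan is to sum the formal column $\hat H$ introduced just before the statement by a $q$-Borel--Laplace summation of order $\mu=\frac{n}{d}$, and to read off the poles from the zeros of the Laplace kernel. First I would unravel the gauge relation: computing $(\sq P)DP^{-1}$ for the unipotent $P=\begin{pmatrix}\mathrm{I}_{m-1}&\hat H^{[\l]}\\0&1\end{pmatrix}$ and $D=\mathrm{Diag}(E_{-n,d,a}\otimes U_r,1)$ shows that $B=P[D]_{\sq}$ is equivalent to the inhomogeneous system $\sq\hat H^{[\l]}-(E_{-n,d,a}\otimes U_r)\hat H^{[\l]}=W$, the same system solved formally by $\hat H$ thanks to Theorem~\ref{theo2}. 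Thus the whole task reduces to producing a meromorphic solution of this system with the prescribed singularities, and to proving its uniqueness.

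Next I would turn the system into scalar $q$-difference equations adapted to $\hat{\mathcal B}_\mu$. Using the companion shape of $E_{-n,d,a}$, every entry of the column is expressed through $\sq$-iterates of the entries sitting in the first companion slot, and the bottom companion row collapses the block into scalar equations of the form $z^n\sq^d h-ah=g$, where $g$ is assembled from the entries of $W$ and is therefore convergent. The Jordan factor $U_r$ is treated by induction on $r$: its nilpotent off-diagonal couples consecutive Jordan layers, so peeling off one layer at a time feeds the previously constructed layer into $g$. Since $\hat{\mathcal B}_\mu$ is only defined on $\C[[z^n]]$, I would split each right-hand side into its residue classes modulo $n$, factor out the monomial $z^\nu$, and treat each class separately; it is this factorisation, together with the $\sq$-shifts coming from the companion reduction (each $\sq$ rotating the spiral by a factor $q$), that produces the exponent $q^{-i_r+1}$ attached to line $i$ in the statement.

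On each scalar piece I would apply $\hat{\mathcal B}_\mu$ and invoke Lemma~\ref{lem1}, which converts $z^n\sq^d h-ah=g$ into a relation of the form $(\z-a)\,\hat{\mathcal B}_\mu(h)=\hat{\mathcal B}_\mu(g)$ (the constant $a$ entering through the $n$th-root normalisation built into $\hat{\mathcal B}_\mu$), whence $\hat{\mathcal B}_\mu(h)=\hat{\mathcal B}_\mu(g)/(\z-a)$. As $g$ is convergent, Lemma~\ref{lem2} gives $\hat{\mathcal B}_\mu(g)\in\mathbb H_d^{[\l]}$, and dividing by $\z-a$ keeps the result in $\mathbb H_d^{[\l]}$ exactly when its single finite singularity avoids the summation spiral $\l q^{d\Z}$; this is where the hypothesis $\l\notin\Sigma$ enters, $\Sigma$ being precisely the set of classes with $\l^n\in a q^{d\Z}$. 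I would then set $\hat H^{[\l]}:=\mathcal L_\mu^{[\l]}(\cdots)$ and use the second identity of Lemma~\ref{lem1} to check that $\mathcal L_\mu^{[\l]}$ sends $(\z-a)^{-1}\hat{\mathcal B}_\mu(g)$ back to a solution of $z^n\sq^d h-ah=g$, so that the reassembled column solves $\sq\hat H^{[\l]}-(E_{-n,d,a}\otimes U_r)\hat H^{[\l]}=W$.

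It remains to localise the poles and to prove uniqueness, and this is the step I expect to be the main obstacle. The poles of $\hat H^{[\l]}$ can only come from the zeros of $\T_{q^d}$ in the denominators of $\mathcal L_\mu^{[\l]}$, which lie on $-\l q^{d\Z}$; the residue-class and $\sq$-shift bookkeeping moves the relevant spiral to $-\l q^{-i_r+1}q^{d\Z}$ on line $i$, and the outer $n$th power in $\mathcal L_\mu^{[\l]}$ caps the order at $n$. The delicate points are verifying the $q^d$-exponential growth that guarantees genuine membership in $\mathbb H_d^{[\l]}$ after dividing by $\z-a$, and tracking the shifts through the combined companion-plus-Jordan reduction so that the spiral is exactly the claimed one. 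For uniqueness, the difference of two admissible solutions solves the homogeneous system $\sq\Delta=(E_{-n,d,a}\otimes U_r)\Delta$, whose nonzero meromorphic solutions are governed by the slope-$\mu$ block and carry poles only on spirals belonging to $\Sigma$; since $\l\notin\Sigma$ the prescribed and the forced spirals are disjoint, so $\Delta$ is holomorphic on $\C^*$ and the functional equation then forces $\Delta=0$.
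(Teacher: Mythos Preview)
The paper does not prove Proposition~\ref{propo1}; it is quoted verbatim from Bugeaud's work \cite{BuT} and used as input. Your proposal is not so much a proof of the cited proposition as it is, in essence, the argument the paper gives for its main result, Theorem~\ref{theo3}: there the authors apply $\hat{\mathcal B}_\mu$ to the scalar equations extracted from $\sq\hat H=(E_{-n,d,a}\otimes U_r)\hat H-W$, obtain the algebraic relation~(\ref{eq2}), show the resulting functions lie in $\mathbb H_d^{[\l]}$ because $\l\notin\Sigma$, apply $\mathcal L_\mu^{[\l]}$, and then invoke the uniqueness statement (borrowed from \cite{RSZ}, Example~6.1.1) to match the outcome with Bugeaud's $\hat H^{[\l]}$. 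You are effectively reversing the logical order: instead of taking Proposition~\ref{propo1} as known and identifying the Borel--Laplace sum with it, you use the Borel--Laplace construction to \emph{produce} the object whose existence Proposition~\ref{propo1} asserts, and you supply your own uniqueness argument. That reorganisation is legitimate and in fact demonstrates that Bugeaud's solutions can be obtained from the summation machinery without appealing to \cite{BuT}.

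Two details in your write-up deserve tightening. First, the relation after applying $\hat{\mathcal B}_\mu$ is not literally $(\z-a)\hat{\mathcal B}_\mu(h)=\hat{\mathcal B}_\mu(g)$: because of the $n$th-root normalisation in the definition of $\hat{\mathcal B}_\mu$ and the residue-class factor $q^{\ell d}$, one gets $\big(q^{\ell d/n}\z-a^{1/n}\big)\hat{\mathcal B}_\mu(h)=\cdots$, as in the paper's equation~(\ref{eq2}); it is this form that makes the condition $\l^n\notin aq^{d\Z}$ the right one. Second, in the Jordan induction your $g$ is not convergent once it contains a previously constructed layer, so Lemma~\ref{lem2} does not apply to it directly; the correct bookkeeping (which the paper also does) is to carry the induction at the level of the Borel transforms, showing inductively that each $\hat{\mathcal B}_\mu(\hat h_{\kappa,(\ell)})\in\mathbb H_d^{[\l]}$ before any Laplace transform is taken.
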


\begin{rem}
In \cite{BuT}, Proposition 3.8, a similar result is shown. In her work, the entry number $i$ of the meromorphic gauge transformation  has poles of order at most $n$ contained in the $q^{d\Z}$-spiral $-\l q^{-i+1}q^{d\Z}$. It is worth mentioning that our meromorphic gauge transformation has the same number of poles, counted with multiplicities, which is a crucial property.
\end{rem}

\begin{proof}
Using $B=\begin{pmatrix}
\mathrm {I}_{dr}&\hat{H}\\
0&1
\end{pmatrix}\Big[\mathrm{Diag} (E_{-n,d,a}\otimes U_r,1 )\Big]_{\displaystyle\sq}$, we find,

\begin{equation}\label{eq5}
\sq \left(\hat{H}\right)=\left(E_{-n,d,a}\otimes U_r\right)\hat{H}+W.
\end{equation}
Let us write $(\hat{h}_{i})_{i=1,\dots, dr}:=\hat{H}$ and $(w_{i})_{i=1,\dots, dr}:=W$. Writing the equations of \eqref{eq5}, we find for all $1\leq i<d$, for all $0\leq k< r$, (we make the convention that $\hat{h}_{i}=0$ for $i>dr$)

\begin{equation}\label{eq3}
\sq \hat{h}_{i+kd}=\hat{h}_{i+1+kd}+\hat{h}_{i+1+(k+1)d}+w_{i+kd}.
\end{equation}
And for all $0< k\leq r$,

\begin{equation}\label{eq4}
z^{n}\sq \hat{h}_{kd}=a\hat{h}_{(k-1)d+1}+a\hat{h}_{kd+1}+w_{kd}.
\end{equation}

For $1\leq i\leq d$, let $\hat{H}_{i}:=(\hat{h}_{i+kd})_{k=0,\dots, r-1}$. With \eqref{eq3} and \eqref{eq4}, we find that there exist  $W_{1},\dots, W_{d}\in (\C[z])^{r}$ such that  \eqref{eq5} is equivalent to 
$$
z^{n}\sq^{d}\hat{H}_{1}=aU_{r}^d\hat{H}_{1}+W_{1}, \hbox{ and for }1<i \leq d, \; \sq \hat{H}_{i-1}=U_r\hat{H}_{i}+W_{i}.
$$

Consequently, it is sufficient to prove the existence and the uniqueness of a meromorphic vector $\hat{H}_{1}^{[\l]}\in \left(\mathcal{M}(\C^{*})\right)^{r}$ with entries having poles of order at most~$1$ contained in the $q^{\frac{d\Z}{n}}$-spiral $-\l q^{\frac{d\Z}{n}}$, satisfying
$$z^{n}\sq^{d} \hat{H}_{1}^{[\l]}=a U_r^d \hat{H}_{1}^{[\l]} +W_{1}. $$
If we put $q':=q^{d}$, and $z':=z^{n}$, we find that the the case $W_{1}\in \C[z^{n}]$ is an application of \cite{RSZ}, Proposition 3.3.1, combined with  \cite{RSZ}, Theorem 6.1.2. Let us write ${W_{1}=\sum_{\kappa=0}^{d-1}W_{1,\kappa}z^{\kappa}}$ with $W_{1,\kappa}\in \C[z^{n}]$ and consider the unique meromorphic solution $\hat{H}_{1,\kappa}^{[\l]}$ of  $z^{n}\sq^{d} \hat{H}_{1,\kappa}^{[\l]}=a U_r^d \hat{H}_{1,\kappa}^{[\l]} +W_{1,\kappa}$ with convenient poles. Then $\hat{H}_{1}^{[\l]}:= \sum_{\kappa=0}^{d-1}\hat{H}_{1,\kappa}^{[\l]}z^{\kappa}$ is a meromorphic solution of $z^{n}\sq^{d} \hat{H}_{1}^{[\l]}=a U_r^d \hat{H}_{1}^{[\l]} +W_{1}$ with convenient poles. Let us prove the uniqueness. To the contrary, let us assume that there exists another meromorphic solution with the same properties for the poles. Then the equation $z^{n}\sq^{d} Y=a U_r^d Y$ has a non zero meromorphic solution having poles of order at most~$1$ contained in the $q^{\frac{d\Z}{n}}$-spiral $-\l q^{\frac{d\Z}{n}}$. This contradict the uniqueness in the case $W_{1}\in \C[z^{n}]$, and proves the result.
\end{proof}

\pagebreak[3]
\section{Main result}\label{sec3}

Let us keep the notations of the previous section. We now state our main result. We refer to $\S \ref{sec1}$ for the definitions of the Borel and Laplace transformations. Let ${[\l]\in \left(\C^{*}/q^{\frac{d\Z}{n}}\right)\setminus \Sigma}$.
We have seen in the proof of Proposition \ref{propo1}, that in order to compute  $\hat{H}^{\left[\l\right]}=:\left(\hat{h}^{\left[\l\right]}_{i}\right)_{i=1,\dots,dr}$ it is sufficient to compute $\hat{H}^{\left[\l\right]}_{1}:=\left(\hat{h}^{\left[\l\right]}_{1+kd}\right)_{k=0,\dots,r-1}$.
Let us write $\hat{H}=:\left(\hat{h}_{i}\right)_{i=1,\dots,dr}$, and  $\hat{H}_{1}:=\left(\hat{h}_{1+kd}\right)_{k=0,\dots,r-1}$.

\pagebreak[3]
\begin{theo}\label{theo3}
We have  $\mathcal{\hat{B}}_{\mu}\left(\hat{H}_{1}\right)\in \left(\mathbb{H}_{\mu}^{[\l]}\right)^{r}$ and 
$$\hat{H}_{1}^{\left[\l\right]}= \mathcal{L}_{\mu}^{[\l]}\circ\hat{\mathcal{B}}_{\mu}\left(\hat{H}_{1}\right).$$
\end{theo}

\begin{proof}
We remind that we have $z^{n}\sq^{d} \hat{H}_{1}^{\left[\l\right]}=aU_{r}^{d}\hat{H}_{1}^{\left[\l\right]}+W_{1}$ for some $W_{1}\in (\C[z])^{r}$. \par 
 We claim that  $\hat{\mathcal{B}}_{\mu}\left(\hat{H}_{1}\right)\in \left(\mathbb{H}_{\mu}^{[\l]}\right)^{r}$. 
We use Remark  \ref{rem2}, to find that

\begin{equation}\label{eq2}
q^{\frac{-d (n-1)}{2}}\z^{n}\hat{\mathcal{B}}_{\mu} \left(\hat{H}_{1}\right)=aU_{r}^d\hat{\mathcal{B}}_{\mu}\left(\hat{H}_{1}\right)+\hat{\mathcal{B}}_{\mu}(W_{1}).
\end{equation}
Using the definition of $\Sigma$, we deduce that for all $\k \in \Z$,

$$\l^{n}q^{\frac{-d (n-1)}{2}} q^{\k d}\neq a.$$
 This proves our claim. \par 
With Lemma \ref{lem2}, we obtain that $\mathcal{L}_{\mu}^{[\l]}\circ\hat{\mathcal{B}}_{\mu}\left( W_{1}\right)=W_{1}$.
Using additionally (\ref{eq2}) and Remark \ref{rem2},  we find that
$$z^{n}\sq^{d} \left(\mathcal{L}_{\mu}^{[\l]}\circ\hat{\mathcal{B}}_{\mu}\left(\hat{H}_{1} \right)\right)=aU_{r}^d\mathcal{L}_{\mu}^{[\l]}\circ\hat{\mathcal{B}}_{\mu}\left(\hat{H}_{1} \right)+W_{1}.$$
Moreover, the entries of $\mathcal{L}_{\mu}^{[\l]}\circ\hat{\mathcal{B}}_{\mu}\left(\hat{H}_{1}\right)$  have poles of order at most~$1$ contained in the $q^{\frac{d\Z}{n}}$-spiral $-\l q^{\frac{d\Z}{n}}$.
But we have seen in the proof of  Proposition \ref{propo1}, that this implies $$\hat{H}_1^{[\lambda]} = \mathcal{L}_{\mu}^{[\l]}\circ\hat{\mathcal{B}}_{\mu}\left(\hat{H}_{1} \right).$$
\end{proof}

\begin{ex}
Assume that $n=1$, $d=2$, $r=1$, $a=1$, and $W=(0,1)^{t}$. Then, the first entry $\hat{h}$ of $\hat{H}$ is solution of 
$$z\sq^{2} \left(\hat{h}\right)=\hat{h}+ z.$$
 We have 
$$\z\hat{\mathcal{B}}_{1/2} \left(\hat{h}\right)=\hat{\mathcal{B}}_{1/2}\left(\hat{h}\right)+ \z,$$
which means that $\hat{\mathcal{B}}_{1/2}\left(\hat{h}\right)=\frac{\z}{\z-1}$. Therefore, for all $[\l]\in \left(\C^{*}/q^{2\Z}\right)\setminus \{[1]\}$ , we find that 
$\hat{H}^{\left[\l\right]}=(\hat{h}^{[\l]},\sq \hat{h}^{[\l]})^{t}$, where 
$\hat{h}^{[\l]}$
$$\hat{h}^{[\l]}=\displaystyle \sum_{\ell\in 2\Z}\frac{q^{\ell}\l}{q^{\ell}\l-1}\times\frac{1}{\T_{q^{2}}\left(\frac{q^{2+\ell}\l}{z}\right)}.$$
\end{ex}

\pagebreak[3]
\begin{rem}\label{rem1}
In \cite{D4}, Theorem 2.5, the author defines a meromorphic solution with another $q$-analogues of the Borel and Laplace transformation. When $d=1$, the two sums obtained are the same. On the other hand when $d\neq 1$ the solutions defined here have less poles than the solutions of \cite{D4}. More precisely, see Theorem \ref{theo2} for the notations, assume that $k=2$ and $n_{2}=0$. Let $d_{1},n_{1}$ be given by Theorem \ref{theo2}. In \cite{D4}, it is shown how to build, with another $q$-Borel and $q$-Laplace transformations, meromorphic gauge transformations similar to $\S \ref{sec2}$ but with germs of entries having $q^{\frac{\Z}{|n_{1}|}}$-spirals of poles of order $1$. The solutions we compute here  have  $q^{\frac{d_{1}\Z}{|n_{1}|}}$-spirals of poles of order $1$. 
\end{rem}

\fussy
\pagebreak[3]

\bibliographystyle{alpha}
\bibliography{biblio}

\def\cprime{$'$} \def\udot#1{\ifmmode\oalign{$#1$\crcr\hidewidth.\hidewidth
  }\else\oalign{#1\crcr\hidewidth.\hidewidth}\fi}
\begin{thebibliography}{DVRSZ03}

\bibitem[Abd60]{Abdi60}
Wazir~Hasan Abdi.
\newblock On {$q$}-{L}aplace transforms.
\newblock {\em Proc. Nat. Acad. Sci. India Sect. A}, 29:389--408, 1960.

\bibitem[Abd64]{Abdi64}
Wazir~Hasan Abdi.
\newblock Certain inversion and representation formulae for {$q$}-{L}aplace
  transforms.
\newblock {\em Math. Z.}, 83:238--249, 1964.

\bibitem[Bal94]{B}
Werner Balser.
\newblock {\em From divergent power series to analytic functions}, volume 1582
  of {\em Lecture Notes in Mathematics}.
\newblock Springer-Verlag, Berlin, 1994.
\newblock Theory and application of multisummable power series.

\bibitem[Bug12]{BuT}
Virginie Bugeaud.
\newblock {\em Groupe de {G}alois local des \'equations aux {$q$}-diff\'erences
  irr\'eguli\`eres}.
\newblock PhD thesis, Institut de Math\'ematiques de Toulouse, 2012.

\bibitem[Dre15a]{D4}
Thomas Dreyfus.
\newblock Building meromorphic solutions of {$q$}-difference equations using a
  {B}orel-{L}aplace summation.
\newblock {\em Int. Math. Res. Not. IMRN}, (15):6562--6587, 2015.

\bibitem[Dre15b]{D3}
Thomas Dreyfus.
\newblock Confluence of meromorphic solutions of $q$-difference equations.
\newblock {\em Ann. Inst. Fourier (Grenoble)}, 65(2):p. 431--507, 2015.

\bibitem[DVRSZ03]{DVRSZ}
Lucia Di~Vizio, Jean-Pierre Ramis, Jacques Sauloy, and Changgui Zhang.
\newblock \'{E}quations aux {$q$}-diff\'erences.
\newblock {\em Gaz. Math.}, (96):20--49, 2003.

\bibitem[DVZ09]{DVZ}
Lucia Di~Vizio and Changgui Zhang.
\newblock On {$q$}-summation and confluence.
\newblock {\em Ann. Inst. Fourier (Grenoble)}, 59(1):347--392, 2009.

\bibitem[Hah49]{Hahn}
Wolfgang Hahn.
\newblock Beitr\"age zur {T}heorie der {H}eineschen {R}eihen. {D}ie {$24$}
  {I}ntegrale der {H}ypergeometrischen {$q$}-{D}ifferenzengleichung. {D}as
  {$q$}-{A}nalogon der {L}aplace-{T}ransformation.
\newblock {\em Math. Nachr.}, 2:340--379, 1949.

\bibitem[Mal95]{M95}
Bernard Malgrange.
\newblock Sommation des s\'eries divergentes.
\newblock {\em Exposition. Math.}, 13(2-3):163--222, 1995.

\bibitem[MZ00]{MZ}
Fabienne Marotte and Changgui Zhang.
\newblock Multisommabilit\'e des s\'eries enti\`eres solutions formelles d'une
  \'equation aux {$q$}-diff\'erences lin\'eaire analytique.
\newblock {\em Ann. Inst. Fourier (Grenoble)}, 50(6):1859--1890 (2001), 2000.

\bibitem[Pra86]{Pra}
Cornelis Praagman.
\newblock Fundamental solutions for meromorphic linear difference equations in
  the complex plane, and related problems.
\newblock {\em J. Reine Angew. Math.}, 369:101--109, 1986.

\bibitem[Ram85]{R85}
Jean-Pierre Ramis.
\newblock Ph\'enom\`ene de {S}tokes et filtration {G}evrey sur le groupe de
  {P}icard-{V}essiot.
\newblock {\em C. R. Acad. Sci. Paris S\'er. I Math.}, 301(5):165--167, 1985.

\bibitem[Ram92]{R92}
Jean-Pierre Ramis.
\newblock About the growth of entire functions solutions of linear algebraic
  {$q$}-difference equations.
\newblock {\em Ann. Fac. Sci. Toulouse Math. (6)}, 1(1):53--94, 1992.

\bibitem[RS07]{RS07}
Jean-Pierre Ramis and Jacques Sauloy.
\newblock The {$q$}-analogue of the wild fundamental group. {I}.
\newblock In {\em Algebraic, analytic and geometric aspects of complex
  differential equations and their deformations. {P}ainlev\'e hierarchies},
  RIMS K\^oky\^uroku Bessatsu, B2, pages 167--193. Res. Inst. Math. Sci.
  (RIMS), Kyoto, 2007.

\bibitem[RS09]{RS09}
Jean-Pierre Ramis and Jacques Sauloy.
\newblock The {$q$}-analogue of the wild fundamental group. {II}.
\newblock {\em Ast\'erisque}, (323):301--324, 2009.

\bibitem[RSZ13]{RSZ}
Jean-Pierre Ramis, Jacques Sauloy, and Changgui Zhang.
\newblock Local analytic classification of {$q$}-difference equations.
\newblock {\em Ast\'erisque}, (355):vi+151, 2013.

\bibitem[RZ02]{RZ}
Jean-Pierre Ramis and Changgui Zhang.
\newblock D\'eveloppement asymptotique {$q$}-{G}evrey et fonction th\^eta de
  {J}acobi.
\newblock {\em C. R. Math. Acad. Sci. Paris}, 335(11):899--902, 2002.

\bibitem[Sau00]{S00}
Jacques Sauloy.
\newblock Syst\`emes aux {$q$}-diff\'erences singuliers r\'eguliers:
  classification, matrice de connexion et monodromie.
\newblock {\em Ann. Inst. Fourier (Grenoble)}, 50(4):1021--1071, 2000.

\bibitem[Tah14]{Taha}
Hidetoshi Tahara.
\newblock $q$-analogues of laplace and borel transforms by means of
  $q$-exponentials.
\newblock {\em Preprint}, 2014.

\bibitem[vdPR07]{VdPR}
Marius van~der Put and Marc Reversat.
\newblock Galois theory of {$q$}-difference equations.
\newblock {\em Ann. Fac. Sci. Toulouse Math. (6)}, 16(3):665--718, 2007.

\bibitem[vdPS03]{VdPS}
Marius van~der Put and Michael~F. Singer.
\newblock {\em Galois theory of linear differential equations}, volume 328 of
  {\em Grundlehren der Mathematischen Wissenschaften [Fundamental Principles of
  Mathematical Sciences]}.
\newblock Springer-Verlag, Berlin, 2003.

\bibitem[Zha99]{Z99}
Changgui Zhang.
\newblock D\'eveloppements asymptotiques {$q$}-{G}evrey et s\'eries
  {$Gq$}-sommables.
\newblock {\em Ann. Inst. Fourier (Grenoble)}, 49(1):vi--vii, x, 227--261,
  1999.

\bibitem[Zha00]{Z00}
Changgui Zhang.
\newblock Transformations de {$q$}-{B}orel-{L}aplace au moyen de la fonction
  th\^eta de {J}acobi.
\newblock {\em C. R. Acad. Sci. Paris S\'er. I Math.}, 331(1):31--34, 2000.

\bibitem[Zha01]{Z01}
Changgui Zhang.
\newblock Sur la fonction {$q$}-gamma de {J}ackson.
\newblock {\em Aequationes Math.}, 62(1-2):60--78, 2001.

\bibitem[Zha02]{Z02}
Changgui Zhang.
\newblock Une sommation discr\`ete pour des \'equations aux {$q$}-diff\'erences
  lin\'eaires et \`a coefficients analytiques: th\'eorie g\'en\'erale et
  exemples.
\newblock In {\em Differential equations and the {S}tokes phenomenon}, pages
  309--329. World Sci. Publ., River Edge, NJ, 2002.

\bibitem[Zha03]{Z03}
Changgui Zhang.
\newblock Sur les fonctions {$q$}-{B}essel de {J}ackson.
\newblock {\em J. Approx. Theory}, 122(2):208--223, 2003.

\end{thebibliography}

\end{document}